\documentclass[12pt, reqno]{amsart}
\usepackage{amsmath,amssymb,amsthm}
\usepackage{mathrsfs}
\usepackage{amsfonts}
\usepackage{hyperref}
\usepackage{enumitem}
\usepackage{graphicx}
\usepackage{mathrsfs}
\usepackage{upgreek}
\usepackage{mathtools}
\usepackage{tikz}       
\usepackage{amsmath}    
\theoremstyle{plain}
\newtheorem{theorem}{Theorem}[section]

\newtheorem{lemma}{Lemma}[section]

\theoremstyle{definition}

\theoremstyle{remark}

\newcommand{\R}{\mathbb{R}}

\begin{document}
	
\title{On the Hausdorff dimension of graph of random vector-valued Weierstrass function}

\author{Jun Jason Luo}  \address{College of Mathematics and Statistics,  Chongqing University, Chongqing, 401331, P.R. China} \email{jun.luo@cqu.edu.cn}

\author{Zi-Rui Zhang}  \address{College of Mathematics and Statistics,  Chongqing University, Chongqing, 401331, P.R. China} \email{202306021017@stu.cqu.edu.cn}


\begin{abstract}
		Let $\Theta=\{\theta_n\}, \Lambda=\{\lambda_n\}$ be two sequences of independent and identically distributed uniform random variables on $[0,1]$. The  random vector-valued Weierstrass function is given by
		$$
		f_{\Theta,\Lambda}(x)=
		\left(
		\sum_{n=0}^{\infty} a^n\cos\bigl(2\pi (b^n x+\theta_n)\bigr),\ 
		\sum_{n=0}^{\infty} a^n\sin\bigl(2\pi (b^n x+\lambda_n)\bigr)
		\right), \;  x\in[0,1],
		$$ where $0<a<1<b,\ ab> 1$. The Hausdorff dimension of the graph of this function is proved to be
	$$\dim_H G(f_{\Theta,\Lambda}) = \min\left\{-\frac{\log b}{\log a}, \, 3 +2\frac{\log a}{\log b}\right\} \quad \text{a.s.}$$
\end{abstract}

	\date{\today}
	
	\keywords{Hausdorff dimension; Box dimension; Weierstrass function}
	
	\subjclass[2020]{Primary 28A78; Secondary 37C45}
	
	\thanks{The research was supported by the Natural Science Foundation of Chongqing (No.CSTB2023NSCQ-MSX0553)}

	\maketitle
	
	\section{Introduction}
	
In 1872, Weierstrass \cite{Weierstrass1872} constructed the pathological function in series form
\begin{equation}\label{w-cos function}
	W(x)=\sum_{n=0}^\infty a^n\cos(2\pi b^n x),\quad x\in [0,1],
\end{equation}
where $0<a<1<b,\ ab\ge 1$, and gave the first rigorous proof that $W$ is continuous everywhere but nowhere differentiable for some $a$ and $b$. This overturned the conventional intuition in classical analysis that ``continuity implies almost everywhere differentiability''. Hardy \cite{Hardy1916} refined the proof of non-differentiability and laid a solid theoretical foundation. In 1977, Mandelbrot \cite{Mandelbrot1977} proposed the famous conjecture:
\[
\dim_H(G(W))=\dim_B(G(W))=2+\frac{\log a}{\log b}
\] where $\dim_H, \dim_B$ are Hausdorff and box dimensions and $G(W)$ denotes the graph of $W$ over $[0,1]$. This conjecture has become an iconic open problem in fractal geometry.

Kaplan, Mallet-Paret, and Yorke \cite{Kaplan1984} employed methods from dynamical systems, viewing the graph of the Weierstrass function as the attractor of an expanding dynamical system, and established the box dimension formula mentioned above. Mauldin and Williams \cite{Mauldin1986} proved that $\dim_H(G(W))$ is bounded below by $2+{\log a}/{\log b}-O(1/{\log b})$ for sufficiently large $b$. Later, Przytycki and Urbański \cite{Przytycki1989} showed that $\dim_H(G(W))>1$. Ledrappier \cite{Ledrappier1992} used Pesin theory to obtain almost-sure dimension results for the sawtooth-type Weierstrass function.  Solomyak \cite{Solomyak1995} provided crucial support for the conjecture using transversality arguments and absolute continuity of Bernoulli convolutions. 

In related work, Hunt \cite{Hunt1998} constructed the randomized Weierstrass function by introducing an independent and identically distributed (i.i.d.) random phase sequence \(\Theta=\{\theta_n\}\):
\[
W_{\Theta}(x) = \sum_{n=0}^{\infty} a^n \cos\left(2\pi \left(b^n x + \theta_n\right)\right).
\]
He proved that with probability one, the Hausdorff dimension of the graph of $W_{\Theta}$ equals the conjectured value, i.e.,
\[
\dim_H(G(W_{\Theta}))=2+\frac{\log a}{\log b} \qquad \text{a.s.}
\]
An analogous result was established in \cite{Rom2014} by Romanowska through randomizing the parameter $a$. Heurteaux \cite{Heurteaux2003} studied the oscillation of  the following more general Weierstrass-type function with random phases
\[
f_{a,b}^{\phi,\Theta}(x) = \sum_{n=0}^\infty a^n \phi(b^n x + \theta_n)
\]
where $\phi$ is a Lipschitz function with period $1$. He showed that the function satisfies a H\"older condition almost surely, and both the packing dimension and box dimension of its graph equal $2 + {\log a}/{\log b}$ almost surely.  Baránski \cite{Baranski2012} extended the result to the   deterministic Weierstrass-type function $f^\phi(x) = \sum_{n=0}^\infty a_n \phi(b_n x +\theta_n)$ and determined  both the Hausdorff and box dimensions of its graph, with the dimension depending on the rapidly growing frequencies satisfying \(a_{n+1}/a_n\to 0\) and \(b_{n+1}/b_n\to \infty\).

A substantial advance toward Mandelbrot’s conjecture was achieved in \cite{Baranski2014}, where Baránski, Bárány and Romanowska verified the conjecture for all integers \(b>1\) over a large parameter class of $a$. Soon later, Shen \cite{Shen2018} fully settled the conjecture via tools from ergodic theory and dynamical systems, assuming integer \(b>1\) and \(1/b<a<1\). Ren and Shen \cite{Ren2021} further extended the preceding result. For every integer \(b>1\) with \(1/b<a<1\) and any real-analytic periodic function \(\phi:\mathbb{R}\to\mathbb{R}\), by defining 
\begin{equation}\label{w-function 1}
	f_{a,b}^\phi(x) = \sum_{n=0}^\infty a^n \phi(b^n x), 
\end{equation} they proved that the function \(f_{a,b}^\phi\) falls into exactly two categories: it is either real-analytic, whose graph has Hausdorff dimension $1$, or fractal, with its graph possessing Hausdorff dimension \(2+{\log a}/{\log b}\).

Meanwhile, partial progress has been achieved concerning the dimensional properties of graphs of vector-valued Weierstrass functions. It is known that when $b>1$ is an integer, the classical Weierstrass function \eqref{w-cos function} is the real part of the lacunary complex power series
\[
w(z) = \sum_{n=0}^{\infty} a^n z^{b^n}, \quad z\in \mathbb{C},\ |z| \leq 1
\]
on the unit circle. Salem and Zygmund \cite{Salem1945} and  Kahane, Weiss, and Weiss \cite{Kahane1963} showed that when $a$ is sufficiently close to $1$, the image of $w$ on the unit circle becomes a Peano curve. Baránski \cite{Baranski2002} further proved that the box dimension of the vector-valued Weierstrass function
\[
f_{a, b}(x)= \left( \sum_{n=0}^{\infty} a^n \cos(2\pi b^n x), \,  \sum_{n=0}^{\infty} a^n \sin(2\pi b^n x) \right)
\]
satisfies
\[
\dim_B(G(f_{a,b})) = 3 + 2\frac{\log a }{\log b},
\]
where the graph $G(f_{a,b})$ is a subset of $\mathbb{R}^3$ (see Figure \ref{fig1}). Ren \cite{Ren2023} extended Baránski’s result by replacing trigonometric functions with a vector-valued Lipschitz function $\phi:\mathbb{R}\to\mathbb{R}^2$ of period $1$. He proved that if $\mathbb{R}^2\setminus\phi(\mathbb{R})$ is disconnected, then for any integer $b>1$ and for $a$ sufficiently close to $1$, the box dimension of the graph of the vector-valued Weierstrass-type function of the form \eqref{w-function 1} is also given by $3 + 2{\log a }/{\log b}.$
More recently, Ren and Shen \cite{RenShen2024} improved their one-dimensional result into high-dimensional vector-valued Weierstrass functions.
	
		\begin{figure}[h] \label{fig1}
		\includegraphics[width=6.5cm]{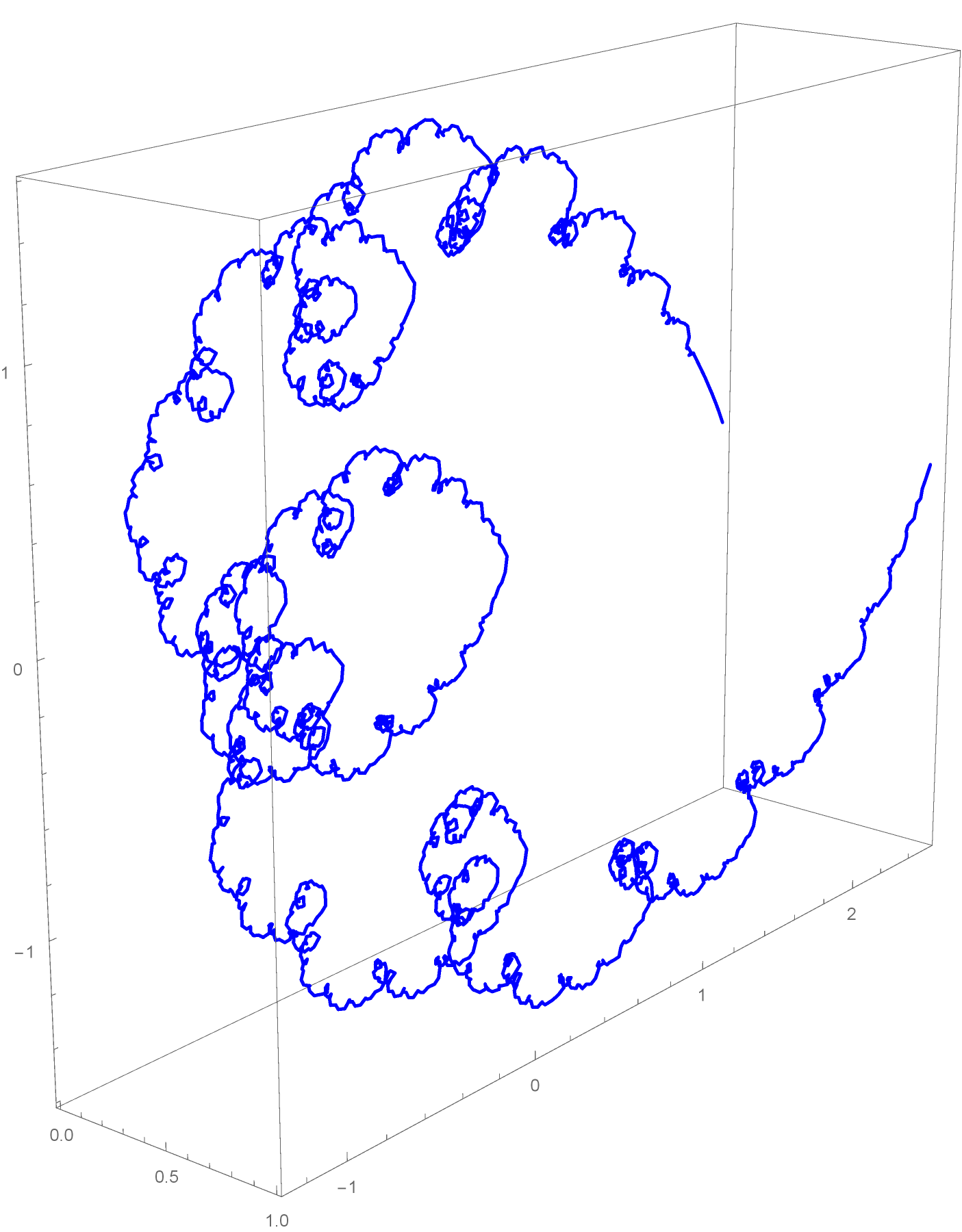}
		\caption{The graph  $G(f_{a,b})$ where $a=0.6, b=2$.} 
	\end{figure}
	
Inspired by the aforementioned results, this paper is devoted to a further investigation of randomized vector-valued Weierstrass functions. Specifically, we focus on verifying the validity of the Mandelbrot's conjecture in high-dimensional random contexts, as well as establishing the consistency between the Hausdorff dimension and the box dimension of the graph of such functions.

Let \( H = [0,1]^\infty \) be the infinite-dimensional product space, and let
\( \Theta = \{\theta_n\} \in H \) and \( \Lambda = \{\lambda_n\} \in H \)
be two sequences of random phases, which are mutually independent and uniformly distributed over \([0,1]\). Based on these random phases, we introduce the two-dimensional randomized vector-valued Weierstrass function
\( f_{\Theta,\Lambda}: [0,1] \to \mathbb{R}^2 \) as follows:
\begin{equation}\label{random-vector-valued function}
f_{\Theta,\Lambda}(x) = \left(
\sum_{n=0}^{\infty} a^n \cos\bigl(2\pi (b^n x + \theta_n)\bigr),\,  \sum_{n=0}^{\infty} a^n \sin\bigl(2\pi (b^n x + \lambda_n)\bigr)
\right),
\end{equation}
where $ 0<a<1<b,\ ab> 1$. This function serves as a complex extension and randomized generalization of the classical one-dimensional Weierstrass function. Its graph is a typical self-affine fractal set in  $\mathbb{R}^3$, which combines randomness, self-affinity, and nowhere differentiability, making it an ideal object for testing fractal dimension theory.

For the above random fractal function, the main dimension result is stated as follows:

\begin{theorem}\label{main thm}
	Let \( f_{\Theta,\Lambda}: [0,1] \to \mathbb{R}^2 \) be the random vector-valued Weierstrass function defined in \eqref{random-vector-valued function}. The Hausdorff dimension of the graph satisfies
	\[
	\dim_H G(f_{\Theta,\Lambda}) =\dim_B G(f_{\Theta,\Lambda}) = \min\left\{-\frac{\log b}{\log a}, \, 3 +2\frac{\log a}{\log b}\right\} \quad \text{a.s.}
	\]
\end{theorem}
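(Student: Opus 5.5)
Set $\alpha=-\log a/\log b\in(0,1)$, so that $a=b^{-\alpha}$. The target value is $\min\{1/\alpha,\,3-2\alpha\}$. We have $3-2\alpha\le 1/\alpha$ iff $2\alpha^2-3\alpha+1\ge 0$ iff $\alpha\le 1/2$ or $\alpha\ge 1$. So the formula reads $3-2\alpha$ when $\alpha\le 1/2$ and $1/\alpha$ when $1/2<\alpha<1$. The plan is to prove the upper bound for the upper box dimension deterministically, for every $\Theta,\Lambda$. The lower bound for the Hausdorff dimension will come from the energy (potential-theoretic) method applied to the push-forward of Lebesgue measure onto the graph, averaged over the random phases in the style of Hunt. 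Since $\dim_H\le\underline{\dim}_B\le\overline{\dim}_B$, the two bounds together give both equalities almost surely.

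\emph{Upper bound.} Both coordinates are $\alpha$-Hölder: split the series at $b^n\approx\delta^{-1}$ and use $|a^n(\cos(\cdot+h)-\cos)|\le \min(2a^n,2\pi a^nb^n|h|)$ with $ab>1$. This gives $|f(x)-f(y)|\le C|x-y|^\alpha$. Now cover the graph over $[0,1]$ at scale $\delta$. Over each interval of length $\delta$ the graph sits in a box $\delta\times\delta^\alpha\times\delta^\alpha$. This can be covered by $\delta^{2\alpha-2}$ cubes of side $\delta$, for a total of $\delta^{-1}\cdot\delta^{2\alpha-2}=\delta^{-(3-2\alpha)}$. Alternatively, use intervals of length $\delta^{1/\alpha}$. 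Each piece of the graph then has diameter $\lesssim\delta$, so $\delta^{-1/\alpha}$ balls suffice. Taking the smaller count gives $\overline{\dim}_B\le\min\{1/\alpha,3-2\alpha\}$.

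\emph{Lower bound.} Let $\mu$ be the lift of Lebesgue measure to the graph, and fix $s<\min\{1/\alpha,3-2\alpha\}$. We bound the expected $s$-energy
\[
\mathbb E\, I_s(\mu)=\int_0^1\!\!\int_0^1 \mathbb E\big[(|x-y|^2+|f(x)-f(y)|^2)^{-s/2}\big]\,dx\,dy .
\]
It suffices to show that for $h=|x-y|$ small, the random vector $Z=f(x)-f(y)\in\mathbb R^2$ has a density bounded by $C h^{-2\alpha}$. Indeed, then
\[
\mathbb E(h^2+|Z|^2)^{-s/2}\lesssim h^{-2\alpha}\int_{\mathbb R^2}(h^2+|z|^2)^{-s/2}dz\ \text{(truncated at } |z|\lesssim1),
\]
which for $s>2$ is $\approx h^{-2\alpha}h^{2-s}$. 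That is integrable in $h$ iff $s<3-2\alpha$. For $s\le 2$ the bound is $\lesssim h^{-2\alpha}$ or a log, which fails to be integrable when $\alpha\ge1/2$. So in the regime $\alpha>1/2$ we need a different estimate, namely the tail bound $\mathbb P(|Z|\le r)\lesssim \min(1,(r/h^\alpha)^2)$ combined with $s<1/\alpha$. Write $\mathbb E(h^2+|Z|^2)^{-s/2}\le \mathbb E|Z|^{-s}$, valid for $s<2$. The density bound gives $\mathbb E|Z|^{-s}\lesssim h^{-\alpha s}$, and this is integrable iff $\alpha s<1$, i.e.\ $s<1/\alpha$. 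Together the two estimates cover both regimes. Finiteness of the expected energy gives $I_s(\mu)<\infty$ almost surely, hence $\dim_H\ge s$ almost surely. Letting $s$ increase along a countable sequence finishes the argument.

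\emph{Main obstacle: the density bound for $Z$.} The two coordinates of $Z$ depend on disjoint independent families $\{\theta_n\}$ and $\{\lambda_n\}$. Hence $Z=(Z_1,Z_2)$ has independent coordinates, and it suffices to prove that each $Z_i$ has density $\le Ch^{-\alpha}$. Take $Z_1=\sum_n a^n[\cos 2\pi(b^nx+\theta_n)-\cos 2\pi(b^ny+\theta_n)]$. The $n$-th term equals $-2a^n\sin(\pi b^n h)\sin(2\pi(b^n\frac{x+y}{2}+\theta_n))$, i.e.\ $c_n\sin(2\pi\phi_n)$ with $\phi_n$ uniform and independent and $c_n=2a^n|\sin\pi b^nh|$. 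Pick an index $N$ with $b^{N}h\approx$ a constant chosen so that $|\sin\pi b^Nh|\ge c>0$; this can be ensured by choosing among two consecutive indices, since $b>1$. Then $c_N\gtrsim h^\alpha$. This is Hunt's key step. Condition on all other phases; the density of $c_N\sin(2\pi\phi_N)$ has an arcsine-type singularity, so it is not bounded, only integrable. To remedy this, combine two indices $N,N+1$ (or $N,N+k$ with fixed $k$) for which both coefficients are $\gtrsim h^\alpha$. The convolution of two arcsine densities of comparable scale is bounded (up to a logarithmic singularity, which one handles by using three terms) by $C h^{-\alpha}$. Conditioning on the remaining phases only translates the distribution, so the bound persists for $Z_1$. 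I expect this step—choosing $N$ uniformly in $h$ and controlling the arcsine singularities—to be the delicate point. Once it is settled, the rest is routine integration.
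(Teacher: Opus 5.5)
Your proposal is correct. For $\alpha<1/2$ it is essentially the paper's argument. The paper also bounds the density of each coordinate of the increment by $c\,|x-y|^{-\alpha}$ using three consecutive arcsine terms (via $\|h_n\|_{3/2}\asymp|\varphi_n|^{-1/3}$ and Young's inequality) and uses the independence of $\Theta$ and $\Lambda$. It then bounds the density of $X=|Z|^2$ by $c\,|x-y|^{-2\alpha}$, which is equivalent to your bound on the planar density of $Z$.

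The genuine difference is the regime $\alpha\in[1/2,1)$. There the paper does not work on the graph directly. It first proves $\dim_H f_{\Theta,\Lambda}([0,1])=\min\{2,1/\alpha\}$ through the Fourier form of the energy, $I_s(\nu)=\gamma(2,s)\int|\xi|^{s-2}|\hat\nu(\xi)|^2\,d\xi$. It writes $\mathbb E|\hat\nu(\xi)|^2$ as an integral of products of Bessel functions $J_0(2\pi\xi_i\varphi_n)$ and uses the decay $|J_0(t)|\lesssim|t|^{-1/2}$ over five consecutive indices. The result is then transferred to the graph through the Lipschitz projection $(x,f(x))\mapsto f(x)$.

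You instead reuse the same density estimate. You turn it into the tail bound $\mathbb P(|Z|\le r)\lesssim (r/h^{\alpha})^2$, which gives $\mathbb E|Z|^{-s}\lesssim h^{-\alpha s}$ for $s<2$; this is integrable exactly when $s<1/\alpha$. Your route is more economical and uniform: one probabilistic lemma handles both regimes, with no Bessel asymptotics and no Fubini/interchange issues in the $\xi$-variable. Moreover, since $I_s(\nu)=\iint|f(x)-f(y)|^{-s}\,dx\,dy$, your estimate recovers the paper's image-dimension theorem as a by-product. The paper's route has the merit of isolating that image statement explicitly, but it buys nothing your argument does not also deliver.

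Your upper bound is also slightly more complete than the paper's. You prove $\overline{\dim}_B G\le 1/\alpha$ using covers by intervals of length $\delta^{1/\alpha}$. The paper's lemma records only $\dim_H G\le 1/\alpha$, although the asserted equality $\dim_H=\dim_B$ for $\alpha>1/2$ needs the box-dimension version.

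Two small points. First, the inequality $(h^2+|Z|^2)^{-s/2}\le |Z|^{-s}$ holds for every $s$; it is the finiteness of $\mathbb E|Z|^{-s}$ that needs $s<2$. Second, in the three-term step, choose $N$ with $b^{N+2}h\in(1/(2b),1/2]$. Then $\pi b^{N+j}h\in(\pi/(2b^3),\pi/2]$ for $j=0,1,2$, so all three coefficients are $\gtrsim h^{\alpha}$ with a constant depending only on $b$. This is exactly the paper's choice of $k=N+2$.
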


The upper bound for the Hausdorff dimension of the graph is known to equal the box dimension, which follows directly from the H\"older continuity of the function \(f_{\Theta,\Lambda}\). The main difficulty lies in establishing the corresponding lower bound. Let \(\beta = -{\log a}/{\log b}\). We split the proof into two cases: For \(\beta \in (0, \frac{1}{2})\), we adopt the potential-theoretic method introduced by Hunt \cite{Hunt1998}; For \(\beta \in [\frac{1}{2}, 1)\), we first compute the Hausdorff dimension of the image of \(f_{\Theta,\Lambda}\), and then use a projection argument to obtain the required lower bound for the graph. For a more detailed discussion on the Hausdorff dimension of images and graphs of random complex series, we refer the reader to the recent work of Lai, Lau, and Zhang \cite{LaiLauZhang2026}.
	
\section{Proof of Theorem \ref{main thm}}
For convenience, we denote by $\beta = - {\log a }/{\log b}.$ Obviously,  $\beta \in (0,1)$ and $a = b^{-\beta}$. The random vector-valued Weierstrass  function in \eqref{random-vector-valued function} can be rewritten as: for $x\in [0,1]$,
	\begin{equation*}
		f_{\Theta,\Lambda}(x) = \left( \sum_{n=0}^{\infty} b^{-\beta n} \cos 2\pi (b^n x + \theta_n),\,  \sum_{n=0}^{\infty} b^{-\beta n} \sin 2\pi (b^n x + \lambda_n) \right).
	\end{equation*}
More precisely, let
\begin{equation}\label{eq-phi-n}
	\varphi_n:=\varphi_n(x,y)=2b^{-\beta n}\sin(\pi b^n(x-y)). 
\end{equation} The sum-to-product formulas in trigonometry imply that
\begin{equation}\label{eq- vector-valued funct.}
	f_{\Theta,\Lambda}(x) -f_{\Theta,\Lambda}(y)=(a_1(x,y), a_2(x,y))
\end{equation}
 where 	
	\begin{eqnarray*}
	a_1(x,y) &=& \sum_{n=0}^{\infty} -\varphi_n \sin\left(\pi\left(b^n x + b^n y +2\theta_n\right)\right), \\
	a_2(x,y) &=& \sum_{n=0}^{\infty} \varphi_n \cos\left(\pi\left(b^n x + b^n y +2\lambda_n\right)\right).
   \end{eqnarray*}
	
Define the graph function of the function $f_{\Theta,\Lambda}$ by $$G(f_{\Theta,\Lambda}, x)= (x,  f_{\Theta,\Lambda}(x)): [0,1]\to \R^3.$$  Then the graph of  $f_{\Theta,\Lambda}$ over $[0,1]$ is given by $$G(f_{\Theta,\Lambda})=G(f_{\Theta,\Lambda}, [0,1]).$$ 

The following two results are straightforward, we provide proofs here for completeness.
	
	\begin{lemma}\label{lem-Holder}
	Both $f_{\Theta,\Lambda}(x)$ and $G(f_{\Theta,\Lambda},x)$ are $\beta$-H\"older continuous on $[0,1]$.
	\end{lemma}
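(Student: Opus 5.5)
The plan is the standard splitting of the lacunary series at the scale $|x-y|$. Fix $x,y\in[0,1]$ with $0<|x-y|<1$, and use the representation \eqref{eq- vector-valued funct.}. Since the sines and cosines multiplying $\varphi_n$ in $a_1,a_2$ have modulus at most $1$, we get, uniformly in $\Theta,\Lambda$,
\[
|a_i(x,y)|\le \sum_{n=0}^\infty |\varphi_n(x,y)| = 2\sum_{n=0}^\infty b^{-\beta n}\,|\sin(\pi b^n(x-y))|,\qquad i=1,2 .
\]
Choose the integer $N\ge 0$ with $b^{-N-1}< |x-y|\le b^{-N}$. The sum then splits into a low-frequency part and a high-frequency part.

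For the low frequencies $n\le N$, use $|\sin t|\le |t|$ to bound each term by $2\pi b^{(1-\beta)n}|x-y|$. Because $\beta<1$, the geometric sum $\sum_{n\le N} b^{(1-\beta)n}$ is at most $C_1 b^{(1-\beta)N}$ with $C_1=(1-b^{\beta-1})^{-1}$. Hence this part is at most $2\pi C_1 b^{(1-\beta)N}|x-y|$. Since $b^{-N}\ge|x-y|$ gives $b^{N}\le|x-y|^{-1}$, this is at most $2\pi C_1|x-y|^{\beta}$.

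For the high frequencies $n>N$, use $|\sin|\le 1$. The tail is at most $2\sum_{n>N} b^{-\beta n}= C_2\, b^{-\beta(N+1)}$ with $C_2=2(1-b^{-\beta})^{-1}$, and $b^{-(N+1)}<|x-y|$ turns this into $C_2|x-y|^\beta$. Combining the two parts gives $|f_{\Theta,\Lambda}(x)-f_{\Theta,\Lambda}(y)|\le \sqrt2\,(2\pi C_1+C_2)|x-y|^\beta$. This holds for every realization of $\Theta,\Lambda$, not merely almost surely. The cases $|x-y|=0$ and $|x-y|=1$ are trivial, the latter because the function is bounded by $2\sum a^n$.

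For the graph map, note that $|G(f_{\Theta,\Lambda},x)-G(f_{\Theta,\Lambda},y)|\le |x-y|+|f_{\Theta,\Lambda}(x)-f_{\Theta,\Lambda}(y)|$. Since $|x-y|\le1$ and $\beta<1$, we have $|x-y|\le|x-y|^\beta$, so the graph map is also $\beta$-H\"older.

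There is no real obstacle here. The only points needing care are the choice of cutoff $N$ and the use of $\beta<1$ (equivalently $ab>1$), which is what makes the low-frequency geometric sum dominated by its last term.
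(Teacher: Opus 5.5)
Your proof is correct and follows the paper's argument: the paper also bounds $|f_{\Theta,\Lambda}(x)-f_{\Theta,\Lambda}(y)|$ by $4\sum_n b^{-\beta n}\min\{1,\pi b^n|x-y|\}$, splits the sum at the scale $b^{-m}<|x-y|\le b^{-m+1}$ (your $N=m-1$), and treats the graph map through $|x-y|\le|x-y|^\beta$, exactly as you do. The only differences are cosmetic: you use the constant $\sqrt2$ where the paper uses the factor coming from $|a_1|+|a_2|$, and you note explicitly that the bound holds for every realization of $\Theta,\Lambda$.
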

	
	\begin{proof}
		For any \( x, y \in [0,1], x\ne  y\),
		\begin{eqnarray*}
				|f_{\Theta,\Lambda}(x) -f_{\Theta,\Lambda}(y)| &=& \sqrt{(a_1(x,y))^2+(a_2(x,y))^2} \\				 
				&\leq& 4 \sum_{n=0}^{\infty}  b^{-\beta n} \min\left\{1, \pi b^n |x - y|\right\} \\
				&\leq& 4  \pi \sum_{n=0}^{m-1} b^{(1-\beta) n} |x - y| + 4 \sum_{n=m}^{\infty} b^{-\beta n} \\
				&=& 4\pi \frac{b^{(1-\beta) m} - 1}{b^{1-\beta} - 1} |x - y| + 4 \frac{b^{-\beta m}}{1 - b^{-\beta}}
		\end{eqnarray*}
		for all \( m \ge 1 \).  Let \( m \) be the positive integer satisfying \( b^{-m} < |x - y| \leq b^{-m+1} \). It follows that
		\begin{eqnarray}\label{equ- constant}
			|f_{\Theta,\Lambda}(x) -f_{\Theta,\Lambda}(y)| &\leq& 4\pi \frac{(b / |x - y|)^{1-\beta}}{b^{1-\beta} - 1} |x - y| + 4\frac{|x - y|^{\beta}}{1 - b^{-\beta}} \nonumber \\
				&\leq& \left( \frac{4\pi b^{1-\beta}}{b^{1-\beta} - 1} + \frac{4}{1 - b^{-\beta}} \right) |x - y|^{\beta} \nonumber \\
				&:=& C |x - y|^{\beta}. 
		\end{eqnarray}		
		Moreover, $$|G(f_{\Theta,\Lambda},x)-G(f_{\Theta,\Lambda},y)|\le|x-y|+|f_{\Theta,\Lambda}(x) -f_{\Theta,\Lambda}(y)|\le (C+1)|x-y|^\beta.$$		
	\end{proof}

	\begin{lemma}\label{lem-lowerbound}
		$\dim_H G(f_{\Theta,\Lambda})\le \min\left\{\frac{1}{\beta}, \, 3-2\beta\right\}$ a.s.
	\end{lemma}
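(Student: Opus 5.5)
We prove the two bounds $1/\beta$ and $3-2\beta$ separately. Both follow from the $\beta$-Hölder continuity in Lemma \ref{lem-Holder}, and both hold for every choice of $\Theta,\Lambda$, not merely almost surely. The constant $C$ in \eqref{equ- constant} does not depend on the phases.

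\emph{The bound $1/\beta$.} By Lemma \ref{lem-Holder}, the map $x\mapsto G(f_{\Theta,\Lambda},x)$ is $\beta$-Hölder from $[0,1]$ into $\R^3$ with constant $C+1$. A $\beta$-Hölder map raises Hausdorff dimension by at most a factor $1/\beta$: cover $[0,1]$ by $N$ intervals of length $1/N$. Their images have diameter at most $(C+1)N^{-\beta}$. Hence
\[
\mathcal H^{s}_{\delta}(G(f_{\Theta,\Lambda}))\le N\,\bigl((C+1)N^{-\beta}\bigr)^{s},
\]
which stays bounded as $N\to\infty$ whenever $s\ge 1/\beta$. Therefore $\dim_H G(f_{\Theta,\Lambda})\le 1/\beta$.

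\emph{The bound $3-2\beta$.} We bound the upper box dimension, which dominates $\dim_H$. Fix $\delta\in(0,1)$ and split $[0,1]$ into about $\delta^{-1}$ intervals $I$ of length $\delta$. On each $I$, the graph lies in the set $I\times f_{\Theta,\Lambda}(I)$. Since $f_{\Theta,\Lambda}(I)$ has diameter at most $C\delta^\beta$, this set sits inside a box $I\times Q$, where $Q$ is a square of side $2C\delta^\beta$. Such a box is covered by $O\bigl((\delta^{\beta}/\delta)^2\bigr)=O(\delta^{2\beta-2})$ cubes of side $\delta$. In total the graph is covered by
\[
N_\delta \lesssim \delta^{-1}\cdot \delta^{2\beta-2}=\delta^{-(3-2\beta)}
\]
cubes. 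This gives $\overline{\dim}_B G(f_{\Theta,\Lambda})\le 3-2\beta$, and hence $\dim_H G(f_{\Theta,\Lambda})\le 3-2\beta$.

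Combining the two bounds yields the statement of Lemma \ref{lem-lowerbound}. No step presents a real obstacle. The only point to check is that the number of $\delta$-cubes needed for the square $Q$ is $O(\delta^{2\beta-2})$, which holds because $\beta<1$ makes the side $2C\delta^\beta$ at least a constant multiple of $\delta$.
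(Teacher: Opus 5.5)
Your proof is correct and takes essentially the same route as the paper: the bound $1/\beta$ comes from the $\beta$-H\"older continuity of the graph map, and the bound $3-2\beta$ comes from covering the graph over each interval of length $\delta$ (the paper uses $1/m$) by a box of size $\delta\times O(\delta^\beta)\times O(\delta^\beta)$, which needs $O(\delta^{2\beta-2})$ cubes of side $\delta$, and then bounding the upper box dimension. Your remark that the H\"older constant does not depend on the phases is also right, so the bound in fact holds for every $(\Theta,\Lambda)$ and not merely almost surely.
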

	
	\begin{proof}
	The $\beta$-H\"older continuity in	Lemma \ref{lem-Holder} yields that 
	$$\dim_H G(f_{\Theta,\Lambda})\le \frac{1}{\beta} \dim_H([0,1])=\frac{1}{\beta}.$$
		
	On the other hand,	take an integer \( m>1 \) and divide \( [0,1] \) into \( m \) subintervals of equal length. Let $J$ be one of the subintervals, H\"older continuity implies that
		\[
		\sup_{x,y\in J}|f_{\Theta,\Lambda}(x)-f_{\Theta,\Lambda}(y)| \le C m^{-\beta}
		\] where $C$ is the coefficient in \eqref{equ- constant}. 	The graph of the function on $J$ can be covered by a cuboid of size
		\( m^{-1}\times C m^{-\beta}\times C m^{-\beta} \), and hence by at most $(C m^{1-\beta} + 1)^2$ cubes of side length \( 1/m \). Thus the whole graph  $G(f_{\Theta,\Lambda})$ over $[0,1]$ can be covered by at most $m\bigl(C m^{1-\beta}+1\bigr)^2=m^{3-2\beta}\bigl(C+m^{\beta-1}\bigr)^2$
		cubes of side length \( 1/m \). Therefore,
		\[
		\dim_H G(f_{\Theta,\Lambda})
		\le \overline{\dim}_B G(f_{\Theta,\Lambda})
		\le \limsup_{m\to\infty}
		\frac{\log\bigl(m^{3-2\beta}(C+m^{\beta-1})^2\bigr)}{-\log(1/m)}
		=3-2\beta
		\] where $\overline{\dim}_B$ denotes the upper box dimension, proving the lemma.
	\end{proof}

\bigskip	
 
\begin{theorem}\label{thm2.1}
 	$\dim_H f_{\Theta,\Lambda}([0,1]) = \min\{2, \frac{1}{\beta}\}$ a.s.
 \end{theorem}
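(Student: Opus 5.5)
The upper bound is immediate. By Lemma \ref{lem-Holder}, $f_{\Theta,\Lambda}$ is $\beta$-H\"older, so $\dim_H f_{\Theta,\Lambda}([0,1])\le \frac1\beta\dim_H[0,1]=\frac1\beta$. Trivially the dimension is at most $2$, since the image lies in $\R^2$. The real work is the lower bound, which I would prove by the potential-theoretic (energy) method in the spirit of Hunt. Let $\mu$ be the push-forward of Lebesgue measure on $[0,1]$ under $f_{\Theta,\Lambda}$. For any $s<\min\{2,1/\beta\}$ I would show
\[
\mathbb{E}\int_0^1\!\!\int_0^1 |f_{\Theta,\Lambda}(x)-f_{\Theta,\Lambda}(y)|^{-s}\,dx\,dy<\infty .
\]
Then the $s$-energy of $\mu$ is finite almost surely, which gives $\dim_H f_{\Theta,\Lambda}([0,1])\ge s$ almost surely. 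Letting $s$ increase along a countable sequence finishes the proof. By Fubini, it suffices to bound $\mathbb{E}|(a_1(x,y),a_2(x,y))|^{-s}$ by $C|x-y|^{-\beta s}$ with $C$ uniform in $x,y$, since $\int\!\!\int|x-y|^{-\beta s}\,dx\,dy<\infty$ exactly when $\beta s<1$.

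The key step is a small-ball estimate for the random vector $(a_1,a_2)$ in \eqref{eq- vector-valued funct.}: for $r>0$,
\[
\mathbb{P}\bigl(|(a_1,a_2)|\le r\bigr)\le C\min\{1,\,r^2|x-y|^{-2\beta}\}.
\]
Given this, the layer-cake formula gives
\[
\mathbb{E}|(a_1,a_2)|^{-s}=\int_0^\infty \mathbb{P}(|(a_1,a_2)|<t^{-1/s})\,dt\le C|x-y|^{-\beta s}
\]
whenever $s<2$, which is exactly what is needed.

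To prove the small-ball estimate, choose $m$ with $b^{-m}<|x-y|\le b^{-m+1}$. Then $|\varphi_m|=2b^{-\beta m}|\sin(\pi b^m(x-y))|$. The trouble is that $\sin(\pi b^m(x-y))$ might be small (for instance if $b^m|x-y|$ is close to $1$), so I would select an index $n\in\{m-1,m,m+1\}$ for which $|\sin(\pi b^n(x-y))|\ge c(b)>0$. This needs a short case check using $b>1$, and $b^{m-1}|x-y|\in(b^{-1},1]$. For that $n$ we have $|\varphi_n|\gtrsim |x-y|^\beta$.

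Now condition on all phases except $\theta_n$ and $\lambda_n$. Since $\theta_n,\lambda_n$ are independent and uniform, $a_1=A+(-\varphi_n)\sin(\pi(b^nx+b^ny+2\theta_n))$ and $a_2=B+\varphi_n\cos(\pi(b^nx+b^ny+2\lambda_n))$, with $A,B$ fixed under the conditioning, and the two coordinates are conditionally independent. The density of $\sin(2\pi U+c)$ for uniform $U$ is $\frac{1}{\pi\sqrt{1-u^2}}$. This density is integrable but unbounded, so the probability of landing in an interval of length $2r/|\varphi_n|$ is at most $C\sqrt{r/|\varphi_n|}$, not $C\,r/|\varphi_n|$. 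With independent coordinates this yields only $\mathbb{P}\le C\,r/|\varphi_n|$, i.e. exponent $1$ instead of $2$. This is the main obstacle.

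I would fix this by using two indices. Pick two well-separated indices $n_1,n_2$ near $m$ (for example $n$ and $n+k$ for a fixed $k$, or the pair $\{m-1,m\}$) that both satisfy $|\varphi_{n_i}|\gtrsim|x-y|^\beta$. For each coordinate I would use the sum of two independent arcsine-type variables. The convolution of two arcsine densities has only a logarithmic singularity, and taking one more term gives a bounded density. So using three indices per coordinate, with $\theta$'s for $a_1$ and $\lambda$'s for $a_2$, each coordinate gets conditional density bounded by $C|x-y|^{-\beta}$, and independence gives the desired $r^2|x-y|^{-2\beta}$. The bound must be uniform over the conditioned values and over $x,y$. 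This uniformity is the delicate point, and it relies on the lower bounds on $|\sin(\pi b^{n_i}(x-y))|$ for the chosen indices, which can be arranged by choosing the indices in a bounded window around $m$ depending only on $b$. Alternatively one can use $s<2$ strictly, which leaves room for a logarithmic loss, so two indices per coordinate already suffice.
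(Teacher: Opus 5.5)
You have the right idea: bound each coordinate's density, use the independence of $a_1$ and $a_2$, then apply layer-cake. But two specific claims in your proposal are false as stated when $b$ is an integer, which is the classical case. Both are easy to repair.

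\textbf{First, the index selection.} It is not true that some $n\in\{m-1,m,m+1\}$ satisfies $|\sin(\pi b^n(x-y))|\ge c(b)>0$. Take $b=2$ and $|x-y|=2^{1-m}$. Then $b^n|x-y|\in\{1,2,4\}$ and all three sines vanish; in fact $\varphi_n=0$ for every $n\ge m-1$. By continuity they are all small for $|x-y|$ near $2^{1-m}$, so no uniform $c(b)$ exists. The pair $\{m-1,m\}$ fails for the same reason.

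The good indices lie below this range. Define $k$ by $\frac{1}{2b^{k+1}}<|x-y|\le\frac{1}{2b^k}$ and take $n\in\{k-2,k-1,k\}$. Then $\pi b^n|x-y|\in(\frac{\pi}{2b^3},\frac{\pi}{2}]$, so $|\varphi_n|\ge 2\sin(\frac{\pi}{2b^3})\,b^{-\beta k}\ge c(b)|x-y|^\beta$. This is exactly the choice the paper makes in its graph theorem.

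\textbf{Second, the uniform bound on all of $[0,1]^2$.} The reduction to $\mathbb{E}|(a_1,a_2)|^{-s}\le C|x-y|^{-\beta s}$ with $C$ uniform on $[0,1]^2$ cannot hold. For integer $b$, $f_{\Theta,\Lambda}$ is $1$-periodic. By Jensen and H\"older continuity, $\mathbb{E}|f_{\Theta,\Lambda}(x)-f_{\Theta,\Lambda}(y)|^{-s}\to\infty$ as $(x,y)\to(0,1)$, while $|x-y|^{-\beta s}$ stays near $1$. Relatedly, the window $\{k-2,k-1,k\}$ only exists when $|x-y|\le\frac{1}{2b^2}$.

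The fix is the paper's. Push forward Lebesgue measure restricted to a short interval such as $[0,\frac{1}{2b^5}]$, and use $\dim_H f_{\Theta,\Lambda}([0,1])\ge\dim_H f_{\Theta,\Lambda}([0,\frac{1}{2b^5}])$.

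Uniformity over the conditioned phases, which you flag as delicate, is automatic. Adding an independent variable or a constant never increases the supremum of a density, so the real issue is only the choice of indices.

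\textbf{With these repairs your argument goes through.} Young's inequality gives $\|h_{k-2}*h_{k-1}*h_k\|_\infty\le\prod_n\|h_n\|_{3/2}\lesssim|x-y|^{-\beta}$ for each coordinate. Since $a_1$ depends only on $\Theta$ and $a_2$ only on $\Lambda$, the joint density is $\lesssim|x-y|^{-2\beta}$. Your layer-cake computation then gives $\mathbb{E}|(a_1,a_2)|^{-s}\lesssim|x-y|^{-\beta s}$ for $s<2$. Your two-index variant with a logarithmic loss is also fine, because $s<2$ strictly.

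\textbf{Comparison with the paper.} The paper works on the Fourier side instead:
\begin{enumerate}
\item It writes $I_s(\nu)=\gamma(2,s)\int|\xi|^{s-2}|\hat\nu(\xi)|^2\,d\xi$.
\item It computes $\mathbb{E}|\hat\nu(\xi)|^2$ exactly as an integral of $\prod_n J_0(2\pi\xi_1\varphi_n)J_0(2\pi\xi_2\varphi_n)$.
\item It decouples the coordinates via $|\xi|^{s-2}\lesssim(|\xi_1||\xi_2|)^{(s-2)/2}$.
\item It uses the decay $|J_0(u)|\lesssim|u|^{-1/2}$ of five consecutive factors to bound each one-dimensional integral by $Cb^{\beta ks/2}$ on $B_k$.
\end{enumerate}
That route averages over the phases exactly, with no conditioning, at the cost of Bessel asymptotics and a Fubini justification. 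Your physical-space route is more elementary. It also reuses the same density machinery as the paper's graph theorem, so the image and graph lower bounds rest on a single estimate.
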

 	
 \begin{proof}
 	 Trivially, $\dim_H f_{\Theta,\Lambda}([0,1]) \le 2$, and by the H\"older continuity, we have
 	\[
 	\dim_H f_{\Theta,\Lambda}([0,1]) \le \frac{1}{\beta} \dim_H([0,1]) = \frac{1}{\beta}.
 	\]
 	That proves $\dim_H f_{\Theta,\Lambda}([0,1]) \le \min\{2, \frac{1}{\beta}\}$. For the converse inequality, we let  \( {\mathcal L} \) be the Lebesgue measure on \([0,1]\). Its restriction to $[0, \frac{1}{2b^5}]$  induces the push-forward measure $$\nu := \mathcal{L}|_{[0, \frac{1}{2b^5}]} \circ f_{\Theta,\Lambda}^{-1}$$  which is supported on $f_{\Theta,\Lambda}([0, \frac{1}{2b^5}])$.  Denote by $B=[0, \frac{1}{2b^5}]\times [0, \frac{1}{2b^5}]$.	The Fourier transform of $\nu$ is
 	\[
 	\hat{\nu}(\xi) = \int_{\mathbb{R}^2} e^{-2\pi i \langle \xi, w \rangle} dw,
 	\quad \text{where} \quad  \xi = (\xi_1, \xi_2) \in \mathbb{R}^2.
 	\]
 	Then
 \begin{eqnarray*}
 	 	|\hat{\nu}(\xi)|^2
 	 &=& \int_{B} e^{-2\pi i \langle \xi, f_{\Theta,\Lambda}(x) - f_{\Theta,\Lambda}(y) \rangle} \, d(x,y) \\
 	 &=& \int_{B} e^{-2\pi i (\xi_1 a_1(x,y) + \xi_2 a_2(x,y))} \, d(x,y) \\ 	 
 	 &=& \int_{B} e^{-2\pi i \xi_1 a_1(x,y)} e^{-2\pi i \xi_2 a_2(x,y)} \, d(x,y).
 \end{eqnarray*}

 	Taking expectation:
 \begin{eqnarray*}
 	\mathbb{E}\left(|\hat{\nu}(\xi)|^2\right)
 	&=& \int_{B} \mathbb{E}\left(e^{-2\pi i \xi_1 a_1(x,y)} e^{-2\pi i \xi_2 a_2(x,y)}\right)\, d(x,y) \\
 	&=& \int_{B} \mathbb{E}\left(e^{-2\pi i \xi_1 a_1(x,y)}\right) \mathbb{E}\left(e^{-2\pi i \xi_2 a_2(x,y)}\right) \, d(x,y).
 \end{eqnarray*}
 	
By substituting the expressions for \(a_1(x,y), a_2(x,y)\), we obtain
 	\[
 	e^{-2\pi i \xi_1 a_1(x,y)}
  	= \prod_{n=0}^\infty e^{2\pi i \xi_1 \varphi_n \sin(\pi(b^n x + b^n y + 2\theta_n))},
 	\]
 	\[
 	e^{-2\pi i \xi_2 a_2(x,y)}
 	= \prod_{n=0}^\infty e^{-2\pi i \xi_2 \varphi_n \cos(\pi(b^n x + b^n y + 2\lambda_n))}.
 	\]
 	Thus,
  \begin{eqnarray*}
 	\mathbb{E}\left(e^{-2\pi i \xi_1 a_1(x,y)}\right)
 	&=& \prod_{n=0}^\infty \mathbb{E}\left(e^{2\pi i \xi_1 \varphi_n \sin(\pi(b^n x + b^n y + 2\theta_n))}\right) \\
    &=& \prod_{n=0}^\infty \int_0^1 e^{2\pi i \xi_1 \varphi_n \sin(2\pi \theta)} d\theta\\
 	&=& \prod_{n=0}^\infty J_0\left(2\pi \xi_1 \varphi_n\right),
 \end{eqnarray*}
 	where $$J_0(x) := \int_0^1 e^{i x \sin(2\pi \theta)} d\theta = \int_0^1 e^{i x \cos(2\pi \theta)} d\theta$$ is the Bessel function satisfying $J_0(-x)=J_0(x)$ (see \cite {Mattila2015} for more details).
 	
 	Similarly,
 	\[
 	\mathbb{E}\left(e^{-2\pi i \xi_2 a_2(x,y)}\right)
 	= \prod_{n=0}^\infty J_0\left(2\pi \xi_2 \varphi_n\right).
 	\]
 	Therefore,
 	\[
 	\mathbb{E}\left(|\hat{\nu}(\xi)|^2\right)
 	= \int_B \prod_{n=0}^\infty \left(J_0(2\pi \xi_1 \varphi_n) J_0(2\pi \xi_2 \varphi_n)\right)\, d(x,y).
 	\]
 	
 For any $0<s<2$, the $s$-energy of $\nu$  is defined by
 $$I_s(\nu)=\int_{\R^2}\int_{\R^2}\frac{d\nu(\xi)d\nu(\eta)}{|\xi-\eta|^s}= \gamma(2,s)\int_{\mathbb{R}^2} |\xi|^{s-2}|\hat{\nu}(\xi)|^2 d\xi$$ where $\gamma(2,s)=\pi^{s-1}\frac{\Gamma(1-s/2)}{\Gamma(s/2)}$ (see details in \cite{Mattila2015}).  By using Fubini-Tonelli theorem and  the inequality $|\xi| \ge |\xi_1|  |\xi_2|$, we can estimate the	expectation of the $s$-energy of $I_s(\nu)$:
 	 \begin{eqnarray}
 	\mathbb{E}(I_s(\nu))
 	&=& \gamma(2,s)\int_{\mathbb{R}^2} |\xi|^{s-2} \mathbb{E}\left(|\hat{\nu}(\xi)|^2\right) d\xi  \nonumber \\
 	&=& \gamma(2,s)\int_{\mathbb{R}^2} |\xi|^{s-2} \int_B \prod_{n=0}^\infty \left(J_0(2\pi \xi_1 \varphi_n) J_0(2\pi \xi_2 \varphi_n)\right) \, d(x,y) d\xi   \nonumber\\
 	&\le& \gamma(2,s)\int_{\mathbb{R}^2} |\xi_1|^{\frac{s-2}{2}} |\xi_2|^{\frac{s-2}{2}} \int_B \prod_{n=0}^\infty \left(J_0(2\pi \xi_1 \varphi_n) J_0(2\pi \xi_2 \varphi_n)\right) \, d(x,y) d\xi    \nonumber  \\
 	&=& \gamma(2,s)\int_B \left(\int_{\mathbb{R}} |t|^{\frac{s-2}{2}} \prod_{n=0}^\infty (J_0(2\pi t \varphi_n))dt\right)^2 \, d(x,y)   \nonumber  \\
 	&=& \gamma(2,s)\sum_{k=5}^{\infty}\int_{B_k} \left(\int_{\mathbb{R}} |t|^{\frac{s-2}{2}} \prod_{n=0}^\infty (J_0(2\pi t \varphi_n))dt\right)^2 \, d(x,y)  \label{eq-expectation}
  \end{eqnarray}
  where $$B_k=\left\{(x,y)\in [0,1]^2: \frac{1}{2b^{k+1}}<|x-y|\le \frac{1}{2b^k}\right\} \quad \text{and}\quad B=\bigcup_{k=5}^\infty B_k.$$    
  It is easy to verify that the Lebesgue measure of $B_k$ is bounded by $\mathcal{L}^2(B_k)\le C_0 b^{-k}$ for some $C_0>0$.    For any $k\ge 5$ and $(x,y)\in B_k$, when $n=k-4,\dots, k$,  we know that
   $$\pi b^n|x-y|\in \left(\frac{\pi}{2b^{k+1-n}},\frac{\pi}{2b^{k-n}}\right]\subset \left(0,\frac{\pi}{2}\right].$$  
   By using the global esitmate of Bessel function  $|J_0(x)|\le (1+|x|^2)^{-1/4}$ (refer to \cite {Mattila2015}), we have 
  \begin{eqnarray*} 
  |J_0(2\pi t \varphi_n)| &\le& |2\pi t \varphi_n|^{-\frac12} \\
  &=& (4\pi t)^{-\frac12}b^{\frac{\beta n}{2}}|\sin(\pi b^n(x-y))|^{-\frac12} \\
  &\le& (2\pi^2t)^{-\frac12}b^{\frac{k+1+(\beta-1)n}{2}}.
  \end{eqnarray*}
  It follows that 
  \begin{eqnarray*} 
  	\prod_{n=k-4}^k |J_0(2\pi t \varphi_n)| &\le& (2\pi^2t)^{-\frac52}b^{\frac{4k\beta-10\beta+15}{2}}.
   \end{eqnarray*}
 	
 	 Denote by
 	 \begin{eqnarray*}
 	 	\Phi(x,y) &:=& \int_{\mathbb{R}} |t|^{\frac{s-2}{2}} \prod_{n=0}^\infty J_0(2\pi t \varphi_n)dt \\
 	 	&=& \left(\int_{|t|\le b^{\beta k}}+\int_{|t|> b^{\beta k}}\right) |t|^{\frac{s-2}{2}} \prod_{n=0}^\infty J_0(2\pi t \varphi_n)dt  \\
 	 	&:=& I_1+I_2.
 	 \end{eqnarray*}
 	After some computations, we get
  $$|I_1|\le\int_{|t|\le b^{\beta k}}|t|^{\frac{s-2}{2}} \prod_{n=0}^\infty |J_0(2\pi t \varphi_n)|dt\le 2\int_{0}^{b^{\beta k}}t^{\frac{s-2}{2}}dt=\frac{4}{s} b^{\frac{1}{2}\beta sk},$$
 	\begin{eqnarray*} 
 	|I_2| &\le& \int_{|t|> b^{\beta k}}|t|^{\frac{s-2}{2}} \prod_{n=0}^\infty |J_0(2\pi t \varphi_n)|dt \\
 	&=& 2\int_{b^{\beta k}}^\infty t^{\frac{s-2}{2}} \prod_{n=0}^\infty |J_0(2\pi t \varphi_n)|dt \\
 	&\le& 2\int_{b^{\beta k}}^\infty t^{\frac{s-2}{2}} \prod_{n=k-4}^k |J_0(2\pi t \varphi_n)|dt \\
 	&\le& 2\int_{b^{\beta k}}^\infty t^{\frac{s-2}{2}}(2\pi^2t)^{-5/2}b^{\frac{4k\beta-10\beta+15}{2}} dt \\
 	&=& 2(2\pi^2)^{-5/2}b^{\frac{1}{2}(5k\beta-10\beta+15)}\int_{b^{\beta k}}^\infty t^{\frac{s-7}{2}} dt  \\
 	&\le& C_1 b^{\frac{1}{2} \beta ks}
 	  \end{eqnarray*}
 where $C_1>0$ is a constant.  Hence 
 \begin{equation}\label{eq-Phi}
 	 |\Phi(x,y)|\le |I_1|+|I_2|\le C_2 b^{\frac{1}{2} \beta ks}, \quad\text{where}\quad C_2 :=C_1+\frac 4s.
 \end{equation} 
 
 From \eqref{eq-expectation} \eqref{eq-Phi}  it follow that
  \begin{eqnarray*}
 	\mathbb{E}(I_s(\nu))&\le& \gamma(2,s) \sum_{k=5}^{\infty}\int_{B_k}(\Phi(x,y))^2 \, d(x,y) \\
 	&\le&  \gamma(2,s)\sum_{k=5}^{\infty} (C_2)^2 b^{\beta ks} \mathcal{L}^2(B_k) \\
 	&\le&  \gamma(2,s)C_3\sum_{k=5}^{\infty} b^{(\beta s-1)k}.
 \end{eqnarray*}
 
 If  $\beta\in (0, \frac12]$, then for any $0<s<2$, we have $\beta s<1$, and $\mathbb{E}(I_s(\nu))<\infty$, which implies that $\dim_H f_{\Theta,\Lambda}([0,\frac{1}{2b^5}])\ge 2$ a.s.
 
 If $\beta\in (\frac12, 1)$, then for any $0<s<\frac {1}{\beta} \  (<2)$, we have  $\beta s<1$, and $\mathbb{E}(I_s(\nu))<\infty$, which implies that $\dim_H f_{\Theta,\Lambda}([0,\frac{1}{2b^5}])\ge \frac{1}{\beta}$ a.s. Therefore, it concludes that  $\dim_H f_{\Theta,\Lambda}([0,1])\ge \dim_H f_{\Theta,\Lambda}([0,\frac{1}{2b^5}])\ge \min\{2, \frac{1}{\beta}\}$ a.s.
 \end{proof}

\bigskip
  
 \begin{theorem} 
 	The Hausdorff dimension of the graph $f_{\Theta,\Lambda}$ satisfies
 	$$\dim_H G(f_{\Theta,\Lambda}) =\dim_B G(f_{\Theta,\Lambda}) = \min\left\{\frac{1}{\beta}, \, 3 -2\beta\right\} \quad \text{a.s.}$$
 \end{theorem}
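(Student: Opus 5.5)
By Lemma \ref{lem-lowerbound} we already have $\dim_H G(f_{\Theta,\Lambda})\le\overline{\dim}_B G(f_{\Theta,\Lambda})\le\min\{1/\beta,3-2\beta\}$. Together with $\dim_H\le\underline{\dim}_B\le\overline{\dim}_B$, it therefore suffices to prove the lower bound $\dim_H G(f_{\Theta,\Lambda})\ge\min\{1/\beta,3-2\beta\}$ almost surely. Note that $1/\beta\le 3-2\beta$ exactly when $(2\beta-1)(\beta-1)\ge0$, that is, when $\beta\ge\frac12$ (recall $\beta<1$). So the target is $1/\beta$ for $\beta\in[\frac12,1)$ and $3-2\beta$ for $\beta\in(0,\frac12)$, and we split into these two cases.

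\emph{Case $\beta\in[\frac12,1)$.} We use projection. The orthogonal projection $P(x,u,v)=(u,v)$ from $\mathbb{R}^3$ to $\mathbb{R}^2$ is Lipschitz and maps $G(f_{\Theta,\Lambda})$ onto $f_{\Theta,\Lambda}([0,1])$. Hence, by Theorem \ref{thm2.1},
\[
\dim_H G(f_{\Theta,\Lambda})\ge\dim_H f_{\Theta,\Lambda}([0,1])=\min\{2,1/\beta\}=1/\beta \quad\text{a.s.}
\]
This case needs nothing further.

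\emph{Case $\beta\in(0,\frac12)$.} We follow Hunt's potential-theoretic method. Let $\mu$ be the push-forward of $\mathcal{L}|_{[0,\frac{1}{2b^5}]}$ under $x\mapsto G(f_{\Theta,\Lambda},x)$. Fix $2<s<3-2\beta$. It suffices to show
\[
\mathbb{E}\,I_s(\mu)=\int_B\mathbb{E}\bigl(|x-y|^2+a_1^2+a_2^2\bigr)^{-s/2}\,d(x,y)<\infty .
\]
To evaluate the expectation, write $r=|x-y|$ and use the Fourier representation of $(r^2+|w|^2)^{-s/2}$ in $w\in\mathbb{R}^2$. It is a radial function whose two-dimensional Fourier transform $K_r(\xi)$ is positive and satisfies $|K_r(\xi)|\lesssim r^{2-s}e^{-cr|\xi|}$ (a Bessel-$K$ type kernel). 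Then the independence computation from the proof of Theorem \ref{thm2.1} gives
\[
\mathbb{E}(r^2+|w|^2)^{-s/2}=\int_{\mathbb{R}^2}K_r(\xi)\prod_n J_0(2\pi\xi_1\varphi_n)J_0(2\pi\xi_2\varphi_n)\,d\xi .
\]
A cheaper alternative avoids Fourier analysis altogether. Conditioning on all phases except $\theta_k,\lambda_k$ for a single $k$ with $b^k r\approx\frac12$, the vector $(a_1,a_2)$ becomes the fixed vector plus $\varphi_k(-\sin(\cdot+2\pi\theta_k),\cos(\cdot+2\pi\lambda_k))$. Since $\theta_k,\lambda_k$ are independent uniforms, this is a translate of a random vector with bounded density on a square of side $\asymp\varphi_k$, and in fact it has density $\lesssim\varphi_k^{-2}$ times an integrable singularity of arcsine type. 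Averaging over the remaining phases then leaves only this conditional bound to compute.

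This conditional estimate is the step I expect to be the main obstacle: controlling the density of $(a_1,a_2)$ at scale $\varphi_k\asymp r^\beta$ with a bound uniform in the other phases. The arcsine singularities of $\sin(2\pi\theta)$ are integrable in two dimensions, so I expect it to go through.

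Granting the estimate, on $B_k$ we get
\[
\mathbb{E}(r^2+|a|^2)^{-s/2}\lesssim \varphi_k^{-2}\int_{|w|\lesssim r^\beta}(r^2+|w|^2)^{-s/2}\,dw+r^{-\beta s}\lesssim r^{-2\beta}r^{2-s}.
\]
Here $s>2$ was used, so that the integral is dominated by the region $|w|\lesssim r$. Summing over $k$, with $\mathcal{L}^2(B_k)\le C_0b^{-k}$ and $r\asymp b^{-k}$, gives
\[
\sum_k b^{-k}\,b^{k(s-2+2\beta)}<\infty \quad\text{since } s<3-2\beta .
\]
Hence $\mathbb{E}\,I_s(\mu)<\infty$, so $I_s(\mu)<\infty$ a.s. and $\dim_H G\ge s$ a.s. Letting $s\uparrow3-2\beta$ along a countable sequence gives $\dim_H G(f_{\Theta,\Lambda})\ge3-2\beta$ a.s., which completes the lower bound in this case.

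Combining both cases with Lemma \ref{lem-lowerbound} gives equality of the Hausdorff dimension and the box dimension (the lower box dimension is sandwiched between them) with $\min\{1/\beta,3-2\beta\}$ a.s.
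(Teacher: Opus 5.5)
Your overall architecture matches the paper's. The upper bound comes from Lemma \ref{lem-lowerbound}, and the case $\beta\ge\frac12$ follows from the projection onto the image together with Theorem \ref{thm2.1}. For $\beta<\frac12$ you show $\mathbb{E}\,I_s(\mu)<\infty$ for $2<s<3-2\beta$, which reduces to showing that $f_{\Theta,\Lambda}(x)-f_{\Theta,\Lambda}(y)$ has density $\lesssim r^{-2\beta}$, where $r=|x-y|$. The paper phrases this through the density of $X=|f_{\Theta,\Lambda}(x)-f_{\Theta,\Lambda}(y)|^2$. Once that bound is available, your final computation and summation over $k$ are correct. (Minor: the factorization should read $(2\beta-1)(\beta-1)\le 0$; your conclusion $\beta\ge\frac12$ is nevertheless right.) The problem is the step you flagged as the main obstacle. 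As you formulate it, with a single frequency $k$, it is false, and this is a genuine gap rather than a technicality.

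Given all phases except $\theta_k,\lambda_k$, the law of $(a_1,a_2)$ is $c+W$, where $W$ has density $\pi^{-2}(\varphi_k^2-u^2)^{-1/2}(\varphi_k^2-v^2)^{-1/2}$ on $[-\varphi_k,\varphi_k]^2$. This density is unbounded.

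\emph{Why integrability is not enough.} The kernel $(r^2+|w|^2)^{-s/2}$ lives at scale $r\ll\varphi_k\asymp r^\beta$, so you need $\sup_c\mathbb{P}(|c+W|\le r)\lesssim (r/\varphi_k)^2$. If an edge of the square $c+[-\varphi_k,\varphi_k]^2$ passes through the origin, this probability is $\asymp (r/\varphi_k)^{3/2}$; at a corner it is $\asymp r/\varphi_k$.

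\emph{Such $c$ occur.} Each coordinate of $c$ ranges over $[-\sum_{n\ne k}|\varphi_n|,\sum_{n\ne k}|\varphi_n|]$, which in general contains $\pm\varphi_k$. For instance, when $ab=b^{1-\beta}$ is close to $1$, the terms with $n<k$ alone sum to more than $|\varphi_k|$ for small $r$.

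\emph{Consequence.} At a corner the conditional expectation is $\gtrsim r^{-s}\cdot r/\varphi_k\asymp r^{1-s-\beta}$, which is not integrable over $[0,1]^2$ once $s\ge 2-\beta$. So a bound uniform in the other phases gives nothing for $s>2$. An edge alone would already cap the argument at $s<\frac52-\frac32\beta<3-2\beta$.

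\emph{The missing idea} is to use several independent frequencies at scale $r^\beta$ in each coordinate. The paper takes $n=k-2,k-1,k$, for which $|\varphi_n|\gtrsim r^\beta$. It computes $\|h_n\|_{3/2}\asymp|\varphi_n|^{-1/3}$ for the arcsine densities $h_n$ and applies Young's inequality:
$\|h_{k-2}*h_{k-1}*h_k\|_\infty\le\prod_{n=k-2}^{k}\|h_n\|_{3/2}\lesssim r^{-\beta}$.
Convolving with the remaining independent terms cannot increase the supremum, so each of $a_1,a_2$ has density $\lesssim r^{-\beta}$. Since $a_1$ depends only on $\Theta$ and $a_2$ only on $\Lambda$, the pair has density $\lesssim r^{-2\beta}$, and your estimate then goes through verbatim.

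Your Fourier route has the same requirement. Each factor only gives $|J_0(2\pi\xi_i\varphi_n)|\lesssim|\xi_i\varphi_n|^{-1/2}$. With one factor per coordinate, the cutoff $|\xi|\lesssim 1/r$ coming from $K_r$ leaves $r^{2-s}\varphi_k^{-1}r^{-1}\asymp r^{1-s-\beta}$, which is the same loss. Three factors per coordinate give $\int\prod_n|J_0(2\pi\xi_i\varphi_n)|\,d\xi_i\lesssim r^{-\beta}$ directly.
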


\begin{proof} 
	Define the projection  $P: G(f_{\Theta,\Lambda}) \to f_{\Theta,\Lambda}([0,1])$ from the graph onto the image by 
	$$P(x, f_{\Theta,\Lambda}(x))= f_{\Theta,\Lambda}(x), \quad x\in [0,1].$$
	Trivially $P$ is a Lipschitz function.  Hence $\dim_H G(f_{\Theta,\Lambda})\ge \dim_H f_{\Theta,\Lambda}([0,1]).$
	
    If $\beta\in [\frac12, 1]$, 	it follows from  Lemma \ref{lem-lowerbound} and Theorem \ref{thm2.1} that
	$$ \frac {1}{\beta} \ge \dim_H G(f_{\Theta,\Lambda})\ge \dim_H f_{\Theta,\Lambda}([0,1])=\frac{1}{\beta} \quad \text{a.s.}$$
	That proves  $\dim_H G(f_{\Theta,\Lambda}) = \frac {1}{\beta}$ a.s.
	
	 If $\beta\in (0, \frac12)$,  we let   \( \mu:= \mu_{\Theta,\Lambda} \) be the induced measure on $\mathbb{R}^3$ through the Lebesgue measure  \( {\mathcal L} \) and $f_{\Theta,\Lambda}$ in the following way: for any Borel set \( V \subset \mathbb{R}^3 \), define
	\begin{equation*}
		\mu (V) = {\mathcal L}\bigl(\{t \in [0,1] : (t, f_{\Theta,\Lambda}(t)) \in V\}\bigr).
	\end{equation*} 
	The measure $\mu$ is supported on the graph  $G(f_{\Theta,\Lambda})$.  
	
	For $s>0$, the \(s\)-energy of \( \mu \)  is given by
	\begin{eqnarray*}
			I_s(\mu)
			&=& \int_{\mathbb{R}^3} \int_{\mathbb{R}^3}\frac{d\mu(w)d\mu(z)}{|w-z|^s} \\
			&=& \int_{[0,1]^2} \frac{d(x,y)}{\bigl((x-y)^2 + |f_{\Theta,\Lambda}(x)-f_{\Theta,\Lambda}(y)|^2\bigr)^{s/2}}.
	\end{eqnarray*}
	
	We will prove that for any \( s\in (2, 3-2\beta) \), the \(s\)-energy \( I_s(\mu) \) is finite for almost every pair of phase sequences \( (\Theta,\Lambda) \in H \times H \). This implies that the Hausdorff dimension of the graph of \( f_{\Theta,\Lambda} \) is at least \( s \). By letting \( s \to 3-2\beta \), we conclude that
	\[
	\dim_H G(f_{\Theta,\Lambda}) \ge 3-2\beta
	\]
	holds for almost every \( (\Theta,\Lambda) \). Hence the desired result follows by Lemma \ref{lem-lowerbound}.
	
	In order to show $I_s(\mu)<\infty $ a.s., it suffices to prove that the expectation \( \mathbb{E}(I_s(\mu)) < \infty \). By Fubini-Tonelli theorem, we have
	\begin{equation*}
		\mathbb{E}(I_s(\mu))= \int_{[0,1]^2} \mathbb{E}\left(\frac{1}{\bigl((x-y)^2 + |f_{\Theta,\Lambda}(x)-f_{\Theta,\Lambda}(y)|^2\bigr)^{s/2}}\right) \, d(x,y).
	\end{equation*}
	
	We claim that there exists $c_0>0$ such that for \( 0 < |x-y| \le \frac{1}{2b^2} \),  
	$$\mathbb{E}\left(\frac{1}{\bigl((x-y)^2 + |f_{\Theta,\Lambda}(x)-f_{\Theta,\Lambda}(y)|^2\bigr)^{s/2}}\right)\le c_0|x-y|^{2-s-2\beta}.$$
	Since \( 2 < s < 3-2\beta \), it concludes that $\mathbb{E}(I_s(\mu))<\infty$.
	
	Fix \( x,y \in [0,1] \) satisfying \( 0 < |x-y| \le \frac{1}{2b^2}\), and define
	$$X = |f_{\Theta,\Lambda}(x)-f_{\Theta,\Lambda}(y)|^2.$$
	Regarding \( X \) as a random variable depending on \( (\Theta,\Lambda) \), it can be verified that \( X \) has a bounded probability density function \( h \). Then
	\begin{eqnarray*}
			&& \mathbb{E}\left(\frac{1}{\bigl((x-y)^2 + |f_{\Theta,\Lambda}(x)-f_{\Theta,\Lambda}(y)|^2\bigr)^{s/2}}\right) \\
			&= &\int_0^\infty \frac{h(t)\,dt}{\bigl((x-y)^2 + t\bigr)^{s/2}} \\
			&= &\int_0^\infty \frac{h\bigl(|x-y|^2 \tau\bigr) |x-y|^2 d\tau}{|x-y|^s (1+\tau)^{s/2}} \\
			&\le &\Bigl(\sup_{t\ge 0} h(t)\Bigr) \, |x-y|^{2-s} \int_0^\infty \frac{d\tau}{(1+\tau)^{s/2}}.
	\end{eqnarray*}
	 Hence to prove the claim, we need to show that there exists a constant \( c_1>0 \), independent of \( x,y \), such that
	\begin{equation}\label{eq-claim}
	h(t) \le c_1|x-y|^{-2\beta}.
	\end{equation}
	
	By using the notation in \eqref{eq-phi-n} and \eqref{eq- vector-valued funct.}, $X$ can be written as
	$$X = (a_1(x,y))^2 +(a_2(x,y))^2 := X_1^2 + X_2^2,$$
	where    
	   $$X_1 = \sum_{n=0}^\infty  \varphi_n \sin\left(\pi\left(b^n x + b^n y +2\theta_n\right)\right)$$ 
	and 
	   $$X_2 = \sum_{n=0}^\infty  \varphi_n \cos\left(\pi\left(b^n x + b^n y +2\lambda_n\right)\right).$$
   Since \( \theta_n \sim \mathrm{Uniform}[0,1] \) are i.i.d., each \( \varphi_n \sin\left(\pi\left(b^n x + b^n y +2\theta_n\right)\right) \) has density function
	\begin{equation*}
		h_n(z) =
		\begin{cases}
			\dfrac{1}{\pi\sqrt{{\varphi_n}^2-z^2}} & |z| < |\varphi_n|, \\
			0 & |z| \ge |\varphi_n|.
		\end{cases}
	\end{equation*}
 The density function of $X_1$ is the infinite convolution 
 $$h_{X_1} = h_0 * h_1 * h_2 * \cdots.$$ 
 Since the supremum of a probability density is nonincreasing under convolution, it follows that	
	\[
	\sup_t h_{X_1}(t) \le \sup_t \bigl(h_{k-2} * h_{k-1} * h_k\bigr)(t)
	\]
	for any fixed \( k \ge 2 \). 
	
	Choose an integer \( k \) such that $\frac{1}{2b^{k+1}} < |x-y| < \frac{1}{2b^k}$. 	Then
	\begin{equation*}
		\frac{\pi}{2b^3} < \Bigl|\pi b^{k-2}(y-x)\Bigr|\le \Bigl|\pi b^k(y-x)\Bigr| \le \frac{\pi}{2}.
	\end{equation*}
	So for \( n = k-2,k-1,k \),
	\begin{equation*}
		|\varphi_n|=|2b^{-\beta n}\sin(\pi b^n(x-y))| > 2\sin\Bigl(\frac{\pi}{2b^3}\Bigr) b^{-k\beta}
		> 2^{1+\beta}\sin\Bigl(\frac{\pi}{2b^3}\Bigr) |x-y|^\beta.
	\end{equation*}
	
	Let \( \|\cdot\|_p \) denote the \( L^p \)-norm. For \( p=3/2 \),
	\begin{eqnarray*}
		\|h_n\|_{\frac32}
			&=& \left( \int_{-|\varphi_n|}^{|\varphi_n|} \frac{dz}{\bigl(\pi\sqrt{{\varphi_n}^2-z^2}\bigr)^{\frac32}} \right)^{\frac23} \\
			&=& \pi^{-1} \left( \int_{-|\varphi_n|}^{|\varphi_n|} ({\varphi_n}^2-z^2)^{-\frac34}dz \right)^{\frac23} \\
			&=& \pi^{-1} I^{\frac23} |\varphi_n|^{-\frac13},
	\end{eqnarray*}
	where \( I = \int_{-1}^1 (1-u^2)^{-\frac34}du < \infty \) is a universal constant. Thus for \( n=k-2,k-1,k \),
	\[
	\|h_n\|_{\frac32} \le c_2 |x-y|^{-\frac{\beta}{3}}
	\]
	where \( c_2 \) depends only on \( b \). 	By Young's inequality and H\"older's inequality,
	\begin{eqnarray*}
		\bigl(h_{k-2} * h_{k-1} * h_k\bigr)(t)
			&\le& \|h_{k-2}\|_{\frac32} \|h_{k-1} * h_k\|_3 \\
			&\le&  \|h_{k-2}\|_{\frac32} \|h_{k-1}\|_{\frac32}\|h_k\|_{\frac32} \\
			& \le& c_3 |x-y|^{-\beta}
	\end{eqnarray*} 
	where $c_3 :=(c_2)^3$. Hence $h_{X_i}(t) \le c_3 |x-y|^{-\beta}$. The same bound holds for  $h_{X_2}$, the density function of $X_2$.
	
	Finally, we compute the density function of $X= X_1^2 + X_2^2$. For \( i=1,2 \), the density of \( X_i^2 \) is
	\begin{equation*}
		h_{X_i^2}(t) =
		\begin{cases}
			\dfrac{h_{X_i}(\sqrt{t})+h_{X_i}(-\sqrt{t})}{2\sqrt{t}}, & t\ge 0, \\
			0, & t<0.
		\end{cases}
	\end{equation*}
	Since \( X_1 \) and \( X_2 \) are independent, the density function of \( X \) satisfies
	\begin{eqnarray*}
			h(t) &=& \bigl(h_{X_1^2} * h_{X_2^2}\bigr)(t) \\
			&=& \int_0^t \frac{h_{X_1}(\sqrt{t-s})+h_{X_1}(-\sqrt{t-s})}{2\sqrt{t-s}}
			\cdot \frac{h_{X_2}(\sqrt{s})+h_{X_2}(-\sqrt{s})}{2\sqrt{s}} ds \\
			&\le& ({c_3})^2|x-y|^{-2\beta} \int_0^t \frac{ds}{\sqrt{s(t-s)}} \\
			&=& ({c_3})^2\pi |x-y|^{-2\beta}.
	\end{eqnarray*}
By letting $c_1 :=\pi ({c_3})^2$, we obtain the inequality \eqref{eq-claim}, hence prove the theorem.
\end{proof}

\bigskip

\noindent {\bf Acknowledgements:} 
The first draft of this paper was completed while the first author was visiting the Department of Mathematics at The Chinese University of Hong Kong in the summer of 2023. He would like to express his sincere gratitude to Prof. De-Jun Feng for his hospitality. The authors are also deeply grateful to Dr. Peng-Fei Zhang for many valuable discussions and suggestions.


\bigskip

\begin{thebibliography}{99}
	
	\bibitem{Baranski2002}
	Barański K. On the complexification of the Weierstrass non-differentiable function.
	\textit{Annales Academiae Scientiarum Fennicae Mathematica}, 2002, 27(2): 325--339.
	
	\bibitem{Baranski2012}
	Barański K. On the dimension of graphs of Weierstrass-type functions with rapidly growing frequencies.
	\textit{Nonlinearity}, 2012, 25(1): 193--209.
	
	\bibitem{Baranski2014}
	Barański K, Bárány B, Romanowska J. On the dimension of the graph of the classical Weierstrass function.
	\textit{Advances in Mathematics}, 2014, 265: 32--59.
	
		
	\bibitem{Falconer2014}
	Falconer K J. \textit{Fractal Geometry: Mathematical Foundations and Applications}.
	3rd ed. Chichester: John Wiley \& Sons, 2014.
	
	\bibitem{Hardy1916}
	Hardy G H. Weierstrass's non-differentiable function.
	\textit{Transactions of the American Mathematical Society}, 1916, 17(3): 301--325.
	
	\bibitem{Heurteaux2003}
	Heurteaux Y. Dimension of graphs of random Weierstrass functions.
	\textit{Annales de l'Institut Henri Poincaré, Probabilités et Statistiques}, 2003, 39(2): 273--296.
	
	\bibitem{Hunt1998}
	Hunt B R. The Hausdorff dimension of graphs of Weierstrass functions.
	\textit{Proceedings of the American Mathematical Society}, 1998, 126(3): 791--800.
	
	\bibitem{Kahane1963}
	Kahane J-P, Weiss M, Weiss G. On lacunary power series.
	\textit{Arkiv för Matematik}, 1963, 5: 1--26.
	
	\bibitem{Kaplan1984}
	Kaplan J L, Mallet-Paret J, Yorke J A. The Lyapunov dimension of a strange attractor.
	\textit{Physica D: Nonlinear Phenomena}, 1984, 13(3): 457--476.
	
	\bibitem{LaiLauZhang2026}
	Lai C-K, Lau K-S, Zhang P-F. Hausdorff dimension of images and graphs of some random complex series. arXiv:2603.05986 [math.CA], 2026.
	
	\bibitem{Ledrappier1992}
	Ledrappier F. On the dimension of the graph of the Weierstrass function.
	\textit{Studia Mathematica}, 1992, 101(3): 233--246.
	
	\bibitem{Mandelbrot1977}
	Mandelbrot B B. \textit{Fractals: Form, Chance, and Dimension}.
	San Francisco: W. H. Freeman, 1977.
	
	\bibitem{Mauldin1986}
	Mauldin R D, Williams S C. On the Hausdorff dimension of some graphs.
	\textit{Transactions of the American Mathematical Society}, 1986, 298(2): 793--803.
	
	\bibitem{Mattila2015}
	Mattila P. \textit{Fourier Analysis and Hausdorff Dimension}.
	Cambridge Studies in Advanced Mathematics 150. Cambridge: Cambridge University Press, 2015, pp. 1--440.
	
	\bibitem{Przytycki1989}
	Przytycki F, Urbański M. On the Hausdorff dimension of some fractal sets.
	\textit{Studia Mathematica}, 1989, 93(2): 155--186.
	
	\bibitem{Ren2021}
	Ren H, Shen W. A dichotomy for the Weierstrass-type functions.
	\textit{Inventiones Mathematicae}, 2021, 225(2): 549--602.
	
	\bibitem{Ren2023}
	Ren H. Box dimension of the graphs of the generalized Weierstrass-type functions.
	\textit{Discrete and Continuous Dynamical Systems -- Series A}, 2023, 43(10): 3830--3838.
	
	\bibitem{RenShen2024}
	Ren H, Shen W. The high-dimensional Weierstrass functions. arXiv:2404.06778 [math.CA], 2024.
	
	\bibitem{Rom2014}
	Romanowska J. Measure and Hausdorff dimension of randomized Weierstrass-type functions. \textit{Nonlinearity}, 2014, 27: 787--801.
	
	\bibitem{Salem1945}
	Salem R, Zygmund A. Lacunary power series and Peano curves.
	\textit{Duke Mathematical Journal}, 1945, 12: 569--578.
	
	\bibitem{Shen2018}
	Shen W. Hausdorff dimension of the graphs of the classical Weierstrass functions.
	\textit{Mathematische Zeitschrift}, 2018, 289(1--2): 223--266.
	
	\bibitem{Solomyak1995}
	Solomyak B. On the random series $\sum \pm\lambda^n$ (an Erdős problem).
	\textit{Annals of Mathematics}, 1995, 142(3): 611--625.
	
	\bibitem{Weierstrass1872}
	Weierstrass K. Über continuirliche Functionen eines reellen Arguments, die für keinen Werth des letzteren einen bestimmten Differentialquotienten besitzen.
	\textit{Mathematische Werke}, Vol. 2, 1872, pp. 71--74.
	
\end{thebibliography}
\end{document}